\theoremstyle{remark}
\newtheorem{rem}{\protect\remarkname}
\theoremstyle{definition}
\newtheorem{defn}{\protect\definitionname}
\theoremstyle{plain}
\newtheorem{thm}{\protect\theoremname}
\theoremstyle{plain}
\newtheorem*{thm*}{\protect\theoremname}
\theoremstyle{remark}
\newtheorem{claim}{\protect\claimname}
\providecommand{\claimname}{Claim}
\providecommand{\definitionname}{Definition}
\providecommand{\remarkname}{Remark}
\providecommand{\theoremname}{Theorem}
\begin{document}
\global\long\def\cZ{\mathbb{\mathcal{Z}}}

\global\long\def\bbc{\mathbb{C}}

\global\long\def\bbd{\mathbb{D}}

\global\long\def\bbe{\mathbb{E}}

\global\long\def\bbn{\mathbb{N}}

\global\long\def\bbp{\mathbb{P}}

\global\long\def\bbr{\mathbb{R}}

\global\long\def\pr#1{\mathbb{\bbp}\left[#1\right]}

\global\long\def\ex#1{\mathbb{\bbe}\left[#1\right]}

\global\long\def\var#1{\operatorname{Var}\left(#1\right)}

\global\long\def\im#1{\mathrm{Im}\left\{  #1\right\}  }
 
\global\long\def\zs#1{\cZ_{#1}}

\global\long\def\eqdef{\coloneqq}

\global\long\def\defeq{\eqqcolon}

\global\long\def\dd{\mathrm{d}}

\global\long\def\indf#1#2{\mathbf{1}_{#2}\left(#1\right)}

\title{Rigidity for zero sets of Gaussian entire functions}

\author{Avner Kiro\textsuperscript{1} and Alon Nishry\textsuperscript{2}}
\maketitle
\footnotetext[1]{School of Mathematical Sciences, Tel Aviv
	University, Tel Aviv, 69978, Israel. Email: avnerkiro@gmail.com. Supported in part by ERC Advanced Grant 692616 and ISF Grant 382/15.}
\footnotetext[2]{School of Mathematical Sciences, Tel Aviv
	University, Tel Aviv, 69978, Israel. Email: alonish@tauex.tau.ac.il. Supported in part by ERC Advanced Grant 692616 and ISF Grant 1903/18.}
\begin{abstract}
In this note we consider a certain class of Gaussian entire functions,
characterized by some asymptotic properties of their covariance kernels,
which we call admissible (as defined by Hayman). A notable example
is the Gaussian Entire Function, whose zero set is well-known
to be invariant with respect to the isometries of the complex plane.

We explore the rigidity of the zero set of Gaussian Taylor series, a phenomenon discovered not
long ago by Ghosh and Peres for the Gaussian Entire Function. In particular, we find that
for a function of infinite order of growth, and having an admissible
kernel, the zero set is ``fully rigid''. This means that if we know
the location of the zeros in the complement of any given compact set,
then the number and location of the zeros inside that set can be determined
uniquely. As far as we are aware, this is the first explicit construction in a natural class of random point processes with full rigidity.
\end{abstract}

\section{Introduction}

Zero sets of random analytic functions, and especially Gaussian ones,
have attracted the attention of researchers from various areas of
mathematics in the last two decades. Given a sequence $\left\{ a_{n}\right\} _{n\geq0}$ of non-negative
numbers, we consider the random Taylor series
\begin{equation}
f\left(z\right)=\sum_{n\geq0}\xi_{n}a_{n}z^{n},\label{eq:GAF_def}
\end{equation}
where $\left\{ \xi_{n}\right\} _{n\ge0}$ is a sequence of independent standard complex Gaussians. By the Cauchy-Hadamard formula, a necessary and sufficient condition for
$f$ to be almost surely (a.s.) an entire function is
\[
\lim_{n\to\infty}a_{n}^{1/n}=0,
\]
and in this case we call $f$ a \emph{Gaussian entire function} (not to be confused with the \emph{Gaussian Entire Function} which is defined below). We
will only consider transcendental entire functions, that is, sequences
$a_{n}$ which contain infinitely many non-zero terms. Denote by $\zs f=f^{-1}\left\{ 0\right\} $
the zero set of $f$; its properties are determined by the covariance
kernel
\[
K_{f}\left(z,w\right)=\ex{f\left(z\right)\overline{f\left(w\right)}}\defeq G(z\bar{w}),\quad\text{where}\quad G(z):=\sum_{n\geq0}a_{n}^{2}z^{n}.
\]

One model that was particularly
well studied is the Gaussian Entire Function (GEF), given by the Taylor
series
\[
F\left(z\right)=\sum_{n=0}^{\infty}\xi_{n}\frac{z^{n}}{\sqrt{n!}},\quad z\in\bbc,
\]
with $\xi_{n}$ as before. 
 It is well-known
that its zero set is invariant under the isometries of the complex
plane (see \cite[\S2.3]{ZerosBook}), and this fact has lead to quite an intensive study of its properties. Not long ago,
Ghosh and Peres \cite{GhoshPeres} established a \emph{rigidity phenomenon}
for the GEF. More precisely, if $K\subset\mathbb{C}$ is any compact
set, then the number of zeros in $K$, and their first moment (i.e. sum),
are uniquely determined if we are given the precise location of all zeros of $F$ in $\bbc\backslash K$. For more details on the GEF see the book \cite{ZerosBook}, and the ICM lecture notes \cite{nazarov2010random}.

It is of interest to construct point processes with higher order rigidity (e.g. \cite{ghosh2015rigidity}). We will say that a point process in $\bbc$ is ``fully rigid'' if its restriction to a compact
set $K$ is determined by its restriction to the complement $\bbc\setminus K$ (see Section \ref{sec:zero_set_rigidity} for a formal definition). To study the rigidity of the zero set of a Gaussian entire function we impose certain conditions on its covariance kernel. The \emph{admissibility} condition is a regularity property of the kernel, originally introduced by Hayman \cite{Hayman} to study generating functions, see Section \ref{sec:admissible_kernels} for the definition. In addition, we relate the rigidity of the zero set, to a certain growth condition on the function $G$, see the statement of Theorem \ref{thm:thm1}.
Two examples where the zero set exhibits full rigidity, include the kernel functions 
\[
G(z)=\exp\left(\exp\left(z\right)\right),\quad\text{and}\quad G(z)=\sum_{n\geq0}\frac{z^{n}}{\log^{n}\left(n+2\right)}.
\]
Another approach to full rigidity can be found in \cite{ghosh2018generalized}.

\subsection{Admissible kernel functions }\label{sec:admissible_kernels}

Given an entire function $G$, we put
\[
a\left(r\right)=r \frac{G^{\prime}\left(r\right)}{G\left(r\right)},\quad b\left(r\right)=ra^{\prime}\left(r\right).
\]
An entire function $G$ that satisfies the following conditions is called
\emph{admissible}:
\begin{enumerate}
\item $b\left(r\right)\to\infty$ as $r\to\infty$,
\item there exists a function, $\delta:\left[0,\infty\right)\to\left(0,\pi\right)$
such that the following holds:
\[
G\left(re^{i\theta}\right)=G\left(r\right)\exp\left(i\theta a\left(r\right)-\tfrac{1}{2}\theta^{2}b\left(r\right)\right)\left(1+o\left(1\right)\right)
\]
uniformly for $\left|\theta\right|\le\delta\left(r\right)$ as and
$r\to\infty$, and
\begin{equation}
\left|G\left(re^{i\theta}\right)\right|=\frac{o\left(G\left(r\right)\right)}{\sqrt{b\left(r\right)}},\label{eq:Hayman_cond_2}
\end{equation}
uniformly for $\delta\left(r\right)\le\left|\theta\right|\le\pi$
as $r\to\infty.$
\end{enumerate}
Let us briefly explain the meaning of each condition. The first condition
implies that the function $G$ grows sufficiently fast as $r\to\infty$.
More precisely,
\[
\log G\left(r\right)\ne O\left(\log^{2}r\right),\quad r\to\infty.
\]
Actually, our theorem relies on a much stronger growth assumption as detailed in  Remark~\ref{rmk:rigidity_growth_cond}. The second condition
implies that near $\theta=0$, the function $\theta\mapsto\log G\left(re^{i\theta}\right)$
is well-approximated by its Taylor polynomial of degree $2$ (in
$\theta$), and away from $\theta=0$, the function $\theta \mapsto G\left(re^{i\theta}\right)$ is negligibly
small compared to $G\left(r\right)$. The class of admissible functions
has many nice closure properties, for example if $f$ is admissible,
then so is $e^{f}$, for details and examples, see \cite{Hayman}.

\subsection{Rigidity of the zero set}\label{sec:zero_set_rigidity}

Ghosh and Peres \cite{GhoshPeres} introduced the following property
for point processes, in the context of zero sets of Gaussian entire
functions.
\begin{defn}
A random point processes $\mathcal{Z}$ taking values in $\bbc$,
is said to be \textit{rigid of level} $n\in\mathbb{N}^{+}\cup\{+\infty\}$,
if for any bounded open set $D\subset\mathbb{C}$ and any integer
$0\leq k<n$, there exists a map $S_{k}$ from the set of all locally
finite point configurations in $\mathbb{C}\setminus D$ to the complex
plane, such that 
\[
\sum_{D\cap\mathcal{Z}}z^{k}=S_{k}\left(\mathcal{Z}\cap\left(\mathbb{C}\setminus D\right)\right),\quad\text{almost surely.}
\]
In addition, if the point process is rigid of level $+\infty$ we
call it \emph{fully rigid}.
\end{defn}

For a fully rigid process, given the locations
of all the points outside a given compact set $K$, we can
recover the precise location of the points inside $K$, using our knowledge of all moments. More precisely, if the number of points in $D$ is $N = S_0$ (which is almost surely finite), then the points $\left\{z_j \right\}_{j=1}^N$ in $D$ are the roots of the polynomial
\[ 
\prod_{j=1}^N \left( z - z_j \right) = \sum_{\ell=0}^N (-1)^{N-\ell} e_{N-\ell}(z_1, \dots, z_N) z^\ell,
\]
where $\left\{e_\ell\right\}_{\ell = 0}^N$ are the elementary symmetric polynomials of the roots (with $e_0 \equiv 1$). It is well known that the polynomials $\left\{e_\ell\right\}$ can be computed in terms of the power sums $\left\{ S_k \right\}$ (see \cite[Prop. 7.7.1]{Stanley1999Enumerative2}).

In Section \ref{sec:Examples} we will construct explicitly Gaussian
entire functions whose zero sets are fully rigid point processes,
using our main result.
\begin{thm}
\label{thm:thm1}Let $f$ be a Gaussian entire function with covariance
kernel $(z,w)\mapsto G(z\overline{w})$. If $G$ is admissible and satisfies
\begin{equation}
B\eqdef\liminf_{r\to\infty}\frac{\log b\left(r\right)}{\log r}>0,\label{eq:growth_cond}
\end{equation}
then $\mathcal{Z}_{f}$ is rigid of level $\left\lceil B\right\rceil $,
the smallest integer larger than $B$. Moreover, if $\frac{\log b(r)}{\log r}\xrightarrow[r\to\infty]{}\infty,$
then  $\mathcal{Z}_{f}$ is fully rigid.\smallskip{}
\end{thm}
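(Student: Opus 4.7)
I plan to follow the variance-method strategy of Ghosh--Peres. Fix a bounded open $D\subset\bbc$ and an integer $0\le k<\lceil B\rceil$, so in particular $k<B$. The plan is to construct a family $h_{R}\in C_{c}^{\infty}(\bbc)$, each equal to $z^{k}$ on $D$ (for $R$ large enough that $D\subset\{|z|<R\}$), such that the linear statistic $N_{f}(h)\defeq\sum_{z_{j}\in\zs f}h(z_{j})$ satisfies $\var{N_{f}(h_{R})}\to 0$ as $R\to\infty$. Writing $N_{f}(h_{R})=\sum_{z_{j}\in D}z_{j}^{k}+Y_{R}$, where $Y_{R}\defeq\sum_{z_{j}\in\bbc\setminus D}h_{R}(z_{j})$ is measurable with respect to $\zs f\cap(\bbc\setminus D)$, vanishing variance will give $\sum_{z_{j}\in D}z_{j}^{k}=\ex{N_{f}(h_{R})}-Y_{R}+o_{L^{2}}(1)$; since the left-hand side is independent of $R$, the right-hand side converges in $L^{2}$ to a random variable measurable with respect to $\zs f\cap(\bbc\setminus D)$, yielding rigidity. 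Running through $k=0,1,\ldots,\lceil B\rceil-1$ gives rigidity of level $\lceil B\rceil$; the full-rigidity case is identical once $\log b(r)/\log r\to\infty$.

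The analytic core is a variance bound derived via Green's identity. For $h\in C_{c}^{\infty}(\bbc)$,
\[
N_{f}(h)=\frac{1}{2\pi}\int_{\bbc}\Delta h(z)\log|f(z)|\,dA(z),
\]
and Gaussianity together with the expansion $\mathrm{Cov}(\log|f(z)|,\log|f(w)|)=\tfrac{1}{4}\sum_{n\ge1}|J_{f}(z,w)|^{2n}/n^{2}$ yields
\[
\var{N_{f}(h)}\lesssim\int_{\bbc}|\Delta h(z)|^{2}\,\kappa(z)\,dA(z),\qquad\kappa(z)\defeq\int_{\bbc}|J_{f}(z,w)|^{2}\,dA(w),
\]
where $J_{f}(z,w)=G(z\overline{w})/\sqrt{G(|z|^{2})G(|w|^{2})}$ is the normalized covariance. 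I will use both clauses of admissibility to estimate $|J_{f}|^{2}$: the Gaussian-window approximation from clause~(2) gives $|J_{f}(z,w)|^{2}\approx\exp\!\bigl(-b(r^{2})(\theta^{2}+(\rho-r)^{2}/r^{2})\bigr)$ for $w=\rho e^{i(\arg z-\theta)}$ with $|\theta|\le\delta(r^{2})$ and $\rho$ near $r=|z|$, while (\ref{eq:Hayman_cond_2}) handles the complementary region. The radial Gaussian factor here comes from the Taylor expansion $2\log G(r\rho)-\log G(r^{2})-\log G(\rho^{2})\approx-(\rho-r)^{2}b(r^{2})/r^{2}$, obtained via the identity $\psi'(s)+s\psi''(s)=b(s)/s$ with $\psi\defeq\log G$. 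A Laplace-type evaluation then yields $\kappa(z)\lesssim|z|^{2}/b(|z|^{2})$ for $|z|$ large.

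With the kernel estimate in hand, take a smooth cutoff $\chi:[0,\infty)\to[0,1]$ with $\chi\equiv1$ on $[0,1]$ and $\chi\equiv0$ on $[2,\infty)$, and set $h_{R}(z)\defeq z^{k}\chi(|z|/R)$. Since $z^{k}$ is holomorphic, $\Delta h_{R}$ is supported on the annulus $\{R\le|z|\le 2R\}$ with $|\Delta h_{R}|\lesssim R^{k-2}$ there. The variance bound then gives
\[
\var{N_{f}(h_{R})}\lesssim R^{2k-4}\cdot\frac{R^{2}}{b(R^{2})}\cdot R^{2}=\frac{R^{2k}}{b(R^{2})},
\]
and the growth hypothesis $b(R^{2})\gtrsim R^{2(B-\varepsilon)}$ (valid for small $\varepsilon>0$ and $R$ large, by the definition of $B$) implies $\var{N_{f}(h_{R})}\lesssim R^{2(k-B+\varepsilon)}\to 0$, as required.

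The chief obstacle will be the kernel estimate $\kappa(z)\lesssim|z|^{2}/b(|z|^{2})$. Admissibility provides quantitative pointwise control of $G$ only on thin angular sectors about the positive real axis, so extending it to the full two-variable kernel $|J_{f}(z,w)|^{2}$ requires simultaneously managing the angular and radial variations: combining clause~(2)'s Gaussian approximation in $\theta$ with the radial Taylor expansion of $\log G$ above, and using (\ref{eq:Hayman_cond_2}) to absorb the tail from $|\theta|>\delta(r^{2})$. A secondary technical point is verifying that the first intensity $\rho_{1}(z)=\tfrac{1}{4\pi}\Delta\log G(|z|^{2})$ is locally integrable, so that $\ex{N_{f}(h_{R})}$ is finite and the decomposition above makes literal sense.
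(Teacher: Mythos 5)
Your plan follows essentially the same route as the paper: the Ghosh--Peres variance criterion, test functions of the form $z^{k}$ times a radial cutoff, the variance written through the covariance of $\log|f|$ and bounded by $\iint|\Delta h(z)|\,|\Delta h(w)|\,|J(z,w)|^{2}$, angular decay of $|J|$ from admissibility, a radial decay estimate, and finally the polynomial lower bound $b(x)\ge c\,x^{B_{0}}$ for some $k<B_{0}<B$ coming from (\ref{eq:growth_cond}). Your fixed-profile dilation $h_{R}(z)=z^{k}\chi(|z|/R)$ with $R\to\infty$ is a legitimate variant of the paper's logarithmically long cutoff with small parameter $\eta$ (either tunable makes the variance small while keeping $h=z^{k}$ on $D$), and reducing the double integral to $\int|\Delta h|^{2}\kappa$ with $\kappa(z)=\int|J(z,w)|^{2}\,dA(w)$ via symmetry is fine.

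The genuine gap is the radial part of the kernel estimate, precisely the step you flag as the chief obstacle. A local Taylor expansion of $\log G$ about the diagonal, giving $2\log G(r\rho)-\log G(r^{2})-\log G(\rho^{2})\approx-(\rho-r)^{2}b(r^{2})/r^{2}$, is not justified by admissibility: clause (2) is purely angular at a single modulus, $b$ need not be monotone or slowly varying, and such an expansion says nothing about $|J(z,w)|$ when $|w|$ is far from $|z|$ --- a region you must integrate over the whole support of $\Delta h_{R}$ (and indeed over all of $\bbc$ in $\kappa$). For the same reason your target bound $\kappa(z)\lesssim|z|^{2}/b(|z|^{2})$ is both unproven and stronger than needed (it would require $b$ not to dip below $\asymp b(r^{2})$ near $r^{2}$, which the hypotheses do not give). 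The paper's resolution is an exact global inequality: since the Taylor coefficients of $G$ are nonnegative, $t\mapsto\log G(e^{t})$ is convex (Cauchy--Schwarz on the coefficients), whence
\[
\frac{G^{2}(rs)}{G(r^{2})\,G(s^{2})}\le\exp\Bigl(-\log^{2}\tfrac{s}{r}\cdot\min_{x\in[r^{2}\wedge s^{2},\,r^{2}\vee s^{2}]}b(x)\Bigr)
\]
for all $r,s>0$, with no approximation and no restriction to the near-diagonal regime. Combining this with the angular bound $\int_{-\pi}^{\pi}|G(Re^{i\theta})|^{2}\,d\theta\lesssim G^{2}(R)/\sqrt{b(R)}$ (your clause-(2) computation, which is correct) and $\min b\ge c(\cdot)^{B_{0}}$ yields the weaker but sufficient estimate $\kappa(z)\lesssim|z|^{2-2B_{0}}$, which is all your final computation actually uses: it gives $\var{N_{f}(h_{R})}\lesssim R^{2k-2B_{0}}\to0$ since $k<B_{0}$. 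With the Taylor-expansion step replaced by this convexity argument your proof closes; as proposed, the radial step would fail.
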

\begin{rem}
\label{rmk:rigidity_growth_cond} The condition (\ref{eq:growth_cond})
implies that $G$ is an entire function of \emph{lower order} $B$, that is
\[
\liminf_{r\to\infty}\frac{\log\log G\left(r\right)}{\log r}=B.
\]
\end{rem}
\begin{rem}
For the specific choice $a_{n}=\left(n!\right)^{-A}$
with $\frac1{2A}$ an \emph{integer}, which in particular implies (\ref{eq:growth_cond}) with $B = \frac1{2A}$,
Ghosh and Krishnapur \cite{ghosh2015rigidity} showed that the zero
set process is rigid of level $\frac1{2A} +1$. However, our Theorem only gives rigidity of level  $\frac1{2A}$, which seems to be the best result we can get from those general hypotheses.
\end{rem}
\begin{rem}
In our theorem, one could relax the usual definition of ``admissible'' by replacing the expression $\sqrt{b(r)}$ in condition (\ref{eq:Hayman_cond_2})  with $b(r)^{1/4}$.
\end{rem}

\subsubsection*{Acknowledgment}

We thank Misha Sodin for encouraging us to work on this topic, very helpful discussions, and suggestions regarding the presentation of the result. We thank an anonymous referee for carefully reading this paper, and for various helpful remarks, corrections, and suggestions.

\section{Proof of Theorem \ref{thm:thm1}\label{sec:rigidity}}

The proof is based on the following theorem, which is a special case of \cite[Theorem 6.1]{GhoshPeres}.
\begin{thm*}[Rigidity condition]
Let $n\in\bbn^{+}$. If for any bounded open set $D\subset\mathbb{C}$,
any $\varepsilon>0$, and every $0\le k<n$ there exists a smooth test function
$\Phi_{D,k}^{\varepsilon}$ which coincides with $z\mapsto z^k$ 
on $D$ and $\var{\int_{\mathbb{C}}\Phi_{D,k}^{\varepsilon}\,\dd\zs f}<\varepsilon$,
then $\zs f$ is rigid of level $n$.\smallskip{}
\end{thm*}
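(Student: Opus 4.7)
The plan is to show that the existence of the test functions $\Phi_{D,k}^{\varepsilon}$ forces the $k$-th power sum of the zeros inside $D$ to be measurable with respect to the $\sigma$-algebra generated by the zeros outside $D$, and then to invoke the Doob--Dynkin lemma to extract the deterministic map $S_{k}$.

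Fix a bounded open set $D\subset\bbc$ and an integer $0\le k<n$. Applying the hypothesis with $\varepsilon=1/m$ produces, for each $m\ge 1$, a smooth compactly supported test function $\Phi_{m}\eqdef\Phi_{D,k}^{1/m}$ coinciding with $z\mapsto z^{k}$ on $D$ and satisfying $\var{X_{m}}<1/m$, where $X_{m}\eqdef\int_{\bbc}\Phi_{m}\,\dd\zs f$. Split
\[
X_{m}=Y+Z_{m},\qquad Y\eqdef\sum_{z\in D\cap\zs f}z^{k},\qquad Z_{m}\eqdef\sum_{z\in\zs f\setminus D}\Phi_{m}(z).
\]
Crucially, $Y$ does not depend on $m$, while $Z_{m}$ is, by construction, measurable with respect to $\mathcal{F}_{\mathrm{out}}\eqdef\sigma(\zs f\cap(\bbc\setminus D))$. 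Consequently $Y-\ex{Y\mid\mathcal{F}_{\mathrm{out}}}=X_{m}-\ex{X_{m}\mid\mathcal{F}_{\mathrm{out}}}$, and the $L^{2}$-contraction property of conditional expectation gives
\[
\var{Y-\ex{Y\mid\mathcal{F}_{\mathrm{out}}}}=\var{X_{m}-\ex{X_{m}\mid\mathcal{F}_{\mathrm{out}}}}\le\var{X_{m}}<\frac{1}{m}
\]
for every $m$. Since the left-hand side is independent of $m$, it must vanish, so $Y=\ex{Y\mid\mathcal{F}_{\mathrm{out}}}$ almost surely; in particular $Y$ is $\mathcal{F}_{\mathrm{out}}$-measurable.

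By the Doob--Dynkin lemma applied to the random locally finite configuration $\zs f\cap(\bbc\setminus D)$, there exists a measurable map $S_{k}$ on the space of such configurations (extended arbitrarily off the support of the pushforward law) for which $Y=S_{k}(\zs f\cap(\bbc\setminus D))$ almost surely, and repeating the argument for every bounded open $D$ and every $0\le k<n$ delivers rigidity of level $n$. The only technical points are integrability issues ensuring the variance identities make sense: $X_{m}\in L^{2}$ is built into the hypothesis, and compact support of $\Phi_{m}$ combined with local boundedness of the first- and second-order correlation functions of $\zs f$ (standard for GAFs via the Edelman--Kostlan formula) places $Z_{m}$, and hence $Y=X_{m}-Z_{m}$, in $L^{2}$. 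The essential step, and the only nontrivial one, is the orthogonal decomposition via conditioning on $\mathcal{F}_{\mathrm{out}}$; the rest of the argument is bookkeeping.
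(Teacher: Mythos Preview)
Your argument is correct and is essentially the standard proof of this criterion. Note, however, that the paper does not give its own proof of this statement at all: it is quoted as ``a special case of \cite[Theorem~6.1]{GhoshPeres}'' and used as a black box. What you have written is precisely (a clean version of) the Ghosh--Peres argument: decompose the linear statistic into the inside piece $Y$ and the outside piece $Z_m$, observe that $Z_m$ is $\mathcal{F}_{\mathrm{out}}$-measurable so that $Y-\ex{Y\mid\mathcal{F}_{\mathrm{out}}}=X_m-\ex{X_m\mid\mathcal{F}_{\mathrm{out}}}$, bound the latter in $L^2$ by $\var{X_m}$, and send $m\to\infty$. The integrability justifications you sketch (compact support of $\Phi_m$, boundedness of $D$, finite second moment for the zero count in compacts via Edelman--Kostlan) are the right ones, and the final appeal to Doob--Dynkin is legitimate since the configuration space on $\bbc\setminus D$ is standard Borel.
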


Let $B$ be as in the statement of Theorem \ref{thm:thm1},  $k \in [0,B)$ an integer, and $\varepsilon >0 $.  Also given $D$ a bounded open set,  choose $L \ge 1$ large enough so that $D$ is contained in $\left\{ |z| \le L \right\}$.  In the proof, $C$ will denote a positive numerical constant, independent of all
the other parameters, which may differ between appearances.  


\subsection{Definition of the test function $\Phi_{D,k}^{\varepsilon}$}
For $\eta = \eta(B, \varepsilon)>0$, which will be chosen later, let $\varphi_{\eta}:\left[0,\infty\right)\text{\ensuremath{\to\left[0,1\right]}}$
be a twice differentiable function which is equal to $1$ on $\left[0,1\right]$,
equal to $0$ on $\left[e^{\frac{1}{\eta}},\infty\right)$, and such
that 
\begin{equation}\label{deriv_bounds}
|\varphi_{\eta}'(r)|\leq\frac{C\eta}{r},\quad|\varphi_{\eta}''(r)|\leq\frac{C\eta}{r^{2}},\quad\forall r\geq0.
\end{equation}
We also put $\widehat{f}(z)=\frac{f(z)}{\sqrt{G(|z|^{2})}}$, and write $\varphi_{\eta,L}(r)=\varphi_{\eta}(r/L)$. Now we choose our test function
\begin{equation}\label{eq:test_func_def}
\Phi_{D,k}^{\varepsilon} (z) = \widetilde{\Phi}_{\eta,L}(z) \eqdef z^{k}\varphi_{\eta,L}(|z|).
\end{equation}

\subsection{Integral expression for the variance of $\widetilde{\Phi}_{\eta,L}$}

According to
\cite[equation 3.5.2]{ZerosBook}, 
\[
\var{\int_{\mathbb{C}}\widetilde{\Phi}_{\eta,L}\,\dd\zs f}=\frac{1}{4\pi^{2}}\iint_{\mathbb{C}^{2}}\Delta\widetilde{\Phi}_{\eta,L}(z)\Delta\widetilde{\Phi}_{\eta,L}(w)\ex{\log\left|\widehat{f}(z)\right|\log\left|\widehat{f}(w)\right|}\,\dd m(z)\,\dd m(w),
\]
where $m$ is the Lebesgue measure in the complex plane. Furthermore,
by \cite[Lemma 3.5.2]{ZerosBook},
\[
\ex{\log\left|\widehat{f}(z)\right|\log\left|\widehat{f}(w)\right|}=\frac{1}{4}\sum_{j\geq1}\frac{1}{j^{2}}\left|J\left(z,w\right)\right|^{j},
\]
where
\[
J\left(z,w\right)=\frac{G\left(z\overline{w}\right)}{\sqrt{G\left(\left|z\right|^{2}\right)}\sqrt{G\left(\left|w\right|^{2}\right)}},
\]
is the normalized covariance kernel. Since the Taylor coefficients
of $G$ are non-negative, $|G(z)|\leq G(\left|z\right|)$, and in
particular $\left|J\left(z,w\right)\right|\le1$. Thus 
\[
\sum_{j\geq1}\frac{1}{j^{2}}\left|J\left(z,w\right)\right|^{2j}\leq \left|J\left(z,w\right)\right|^{2} \sum_{j\geq1}\frac{1}{j^{2}} \leq2\left|J\left(z,w\right)\right|^{2},
\]
which in turn implies
\[
\var{\int_{\mathbb{C}}\widetilde{\Phi}_{\eta,L}\,\dd\zs f}\le C\iint_{\mathbb{C}^{2}}\left|\Delta\widetilde{\Phi}_{\eta,L}(z)\right|\left|\Delta\widetilde{\Phi}_{\eta,L}(w)\right|\left|J\left(z,w\right)\right|^{2}\,\dd m(z)\,\dd m(w).
\]
Recall that the Laplace operator in polar coordinates $z = r e^{i \theta}$ is given by $\frac{\partial^2}{\partial r^2} + \frac1r \frac{\partial}{\partial r} + \frac1{r^2} \frac{\partial^2}{\partial \theta^2}$. Thus, substituting \eqref{eq:test_func_def}, we compute
\[
\Delta\widetilde{\Phi}_{\eta,L}(z) = \Delta_{z,\overline{z}} \left(z^k \varphi_{\eta,L}(|z|) \right) = \Delta_{r,\theta} \left(r^k e^{i k \theta} \varphi_{\eta,L}(r) \right) = r^{k-2} e^{i k \theta} \left[(2k + 1) \tfrac{r}{L} \varphi^\prime_\eta(\tfrac{r}{L}) + \tfrac{r^2}{L^2} \varphi^{\prime\prime}_\eta(\tfrac{r}{L})\right].
\]
Hence, by \eqref{deriv_bounds} we obtain
\[
\left|\Delta\widetilde{\Phi}_{\eta,L}(z)\right| \le C \eta |z|^{k-2} (k+1).
\]
Moreover, observe that $\Delta \widetilde{\Phi}_{\eta,L}$ vanishes outside the annulus $\left\{ L \le |z| \le L e^{1/\eta}  \right\}$; this follows since $\varphi_{\eta,L}$ is supported therein, and $z^k$ is holomorphic. Combining these observations we obtain the following bound for the variance
\[
\var{\int_{\mathbb{C}}\widetilde{\Phi}_{\eta,L}\,\dd\zs f}\le C (k+1)^2 \eta^2 \iint_{\mathbb{C}^{2}} \chi_L(|z|) \chi_L(|w|) \left|J\left(z,w\right)\right|^{2}\,\dd m(z)\,\dd m(w),
\]
where $\chi_L(r)= r^{k-2}\indf {r/L}{[1,e^{1/\eta}]}$.

%

Using polar coordinates $z=L re^{i\theta_{1}}$ and $w=L se^{i\theta_{2}}$, and the definition of $J$, we finally obtain the following bound for the variance
\begin{equation}\label{eq:var_bound}
\var{\int_{\mathbb{C}}\widetilde{\Phi}_{\eta,L}\,\dd\zs f} \le C (k+1)^2 \eta^2 L^{2(k-1)}  \cdot I_L,
\end{equation}

where
\[
I_L \eqdef  \int_0^\infty \int_0^\infty \chi(r) \chi(s) \, \frac{A(L^2 r s)}{G(L^2 r^2) G(L^2 s^2)}  \, r s \, \dd r \, \dd s, \quad A(R)\eqdef\int_{-\pi}^{\pi}\int_{-\pi}^{\pi}\left|G^{2}\left(Re^{i\left(\theta_{1}-\theta_{2}\right)}\right)\right|\,\dd\theta_{1}\,\dd\theta_{2},
\]
and  $\chi \equiv \chi_1$.


\subsection{Preliminary claims}

Before bounding the integral $I_{L}$, we need two simple claims.
\begin{claim}
\label{clm:bnd_inner_int}We have
\[
A(R) \le C\cdot\frac{G^{2}\left(R\right)}{\sqrt{b\left(R\right)}},
\]
for $R$ sufficiently large.
\end{claim}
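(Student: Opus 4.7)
The plan is to exploit the fact that the integrand depends only on the difference $\theta_1 - \theta_2$, reducing the double integral to a one-dimensional one, and then to apply the two admissibility conditions to handle the near-zero and away-from-zero regions separately.

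First, I would observe that since $|G^{2}(Re^{i(\theta_{1}-\theta_{2})})|$ depends only on $\theta = \theta_{1} - \theta_{2}$, Fubini and a change of variables give
\[
A(R) \;=\; 2\pi \int_{-\pi}^{\pi} \left|G(Re^{i\theta})\right|^{2} d\theta.
\]
Now split this integral at $\theta = \pm\delta(R)$, where $\delta$ is the function supplied by the admissibility of $G$.

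For the inner piece $|\theta|\le\delta(R)$, condition (2) of admissibility yields
\[
|G(Re^{i\theta})|^{2} = G(R)^{2}\exp\!\bigl(-\theta^{2}b(R)\bigr)(1+o(1)),
\]
uniformly. Extending the Gaussian integral to all of $\mathbb{R}$ and using $\int_{\mathbb{R}}e^{-\theta^{2}b(R)}\,d\theta = \sqrt{\pi/b(R)}$ yields a contribution of order $G(R)^{2}/\sqrt{b(R)}$, which is exactly the target order of magnitude.

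For the outer piece $\delta(R)\le|\theta|\le\pi$, condition (\ref{eq:Hayman_cond_2}) gives $|G(Re^{i\theta})|^{2} = o(G(R)^{2}/b(R))$ uniformly, so integrating over an interval of length at most $2\pi$ produces a contribution of order $o(G(R)^{2}/b(R))$, which is negligible compared to $G(R)^{2}/\sqrt{b(R)}$ since $b(R)\to\infty$. Summing the two contributions and absorbing constants into $C$ yields the claimed bound. No real obstacle is expected here; the only subtlety is the bookkeeping to make sure the $o$-terms can be absorbed, which works precisely because $b(R)\to\infty$ forces $1/b(R)\ll 1/\sqrt{b(R)}$ (this also explains the remark that $b(R)^{1/4}$ in (\ref{eq:Hayman_cond_2}) would have sufficed).
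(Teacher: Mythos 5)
Your proposal is correct and follows essentially the same route as the paper: reduce $A(R)$ to $2\pi\int_{-\pi}^{\pi}|G(Re^{i\theta})|^{2}\,\dd\theta$, split at $\pm\delta(R)$, bound the central piece by a Gaussian integral over all of $\bbr$ giving $C\,G^{2}(R)/\sqrt{b(R)}$, and absorb the outer piece of size $o\!\left(G^{2}(R)/b(R)\right)$ using $b(R)\to\infty$. Your parenthetical about why $b(R)^{1/4}$ in the second admissibility condition would suffice is also consistent with the paper's remark.
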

\begin{proof}
Making the linear change of variables $\theta_{2}=\theta_{2},\,\theta=\theta_{1}-\theta_{2}$
gives
\[
A(R)=\int_{-\pi}^{\pi}\int_{-\pi}^{\pi}\left|G^{2}\left(Re^{i\theta}\right)\right|\,\dd\theta\,\dd\theta_{2}=2\pi\int_{-\pi}^{\pi}\left|G^{2}\left(Re^{i\theta}\right)\right|\,\dd\theta.
\]
Recall that by admissibility
\[
\frac{\left|G\left(Re^{i\theta}\right)\right|^2}{G^2\left(R\right)}=\begin{cases}
\exp\left(-\theta^{2}b(R)\left(1+o(1)\right)\right), & \left|\theta\right|\le\delta\left(R\right);\\
\frac{o\left(1\right)}{b(R)}, & \text{otherwise},
\end{cases}
\]
as $R\to\infty$, where the $o(1)$ terms are uniform in $\theta$. Thus, for $R$ sufficiently large
\[
A(R)\leq 2\pi \cdot G^{2}\left(R\right)\left[\int_{|\theta|\leq\delta(R)}\exp\left(-\tfrac12 \theta^{2}b(R)\right)\,\dd\theta+\frac{1}{b(R)}\right].
\]
Since
\begin{equation*}
\int_{|\theta|\leq\delta(R)}\exp\left(-\tfrac12 \theta^{2}b(R) \right)\,\dd\theta \leq \int_{\mathbb{R}}\exp\left(-\tfrac12 \theta^{2}b(R)\right)\dd\theta
 \leq\frac{C}{\sqrt{b(R)}}
\end{equation*}
the claim follows.
\end{proof}
\begin{claim}
\label{clm:bound_for_norm_kernel}For $r\leq s$, 
\[
\frac{G^{2}(L^{2}rs)}{G(L^{2}r^{2})G\left(L^{2}s^{2}\right)}\leq\exp\left(-\log^{2}\frac{s}{r}\cdot\min_{x\in\left[L^{2}r^{2},L^{2}s^{2}\right]}b\left(x\right)\right).
\]
\end{claim}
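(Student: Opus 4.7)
The plan is to reduce the estimate to a convexity argument via the substitution $\psi(u) \eqdef \log G(e^u)$. A direct computation using the definitions of $a$ and $b$ gives $\psi'(u) = a(e^u)$ and $\psi''(u) = b(e^u)$. Set $u_1 = \log(L^2 r^2)$, $u_2 = \log(L^2 s^2)$, and $u_m = \log(L^2 rs)$. The key observation is that $u_m = \tfrac{1}{2}(u_1 + u_2)$ is the exact midpoint and that $u_m - u_1 = u_2 - u_m = \log(s/r)$. Taking logarithms of the left-hand side of the claim therefore yields
\[
\log \frac{G^2(L^2 rs)}{G(L^2 r^2) G(L^2 s^2)} = 2\psi(u_m) - \psi(u_1) - \psi(u_2).
\]

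The heart of the proof is Taylor's formula with integral remainder applied to $\psi$ around $u_m$, for $j=1,2$:
\[
\psi(u_j) = \psi(u_m) + \psi'(u_m)(u_j - u_m) + \int_{u_m}^{u_j}(u_j - t)\psi''(t)\,\dd t.
\]
Summing the two identities, the first-order terms cancel since $(u_1 - u_m) + (u_2 - u_m) = 0$. After swapping the endpoints of the $j=1$ integral so that the integrand becomes non-negative, each remainder is of the form $\int (t-u_1)\psi''(t)\,\dd t$ or $\int (u_2-t)\psi''(t)\,\dd t$ on a subinterval of $[u_1,u_2]$. Replacing $\psi''(t) = b(e^t)$ by $b_{\min} \eqdef \min_{x \in [L^2 r^2, L^2 s^2]} b(x)$ and computing the elementary integrals gives
\[
\psi(u_1) + \psi(u_2) - 2\psi(u_m) \;\geq\; \frac{b_{\min}}{2}\bigl[(u_m - u_1)^2 + (u_2 - u_m)^2\bigr] = b_{\min}\log^2(s/r),
\]
and exponentiating yields the claim.

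I do not expect any real obstacles. The only minor point to verify is the sign convention in the remainder when $u_1 < u_m$, which amounts to rewriting $\int_{u_m}^{u_1}(u_1 - t)\psi''(t)\,\dd t = \int_{u_1}^{u_m}(t - u_1)\psi''(t)\,\dd t \geq 0$. Note also that when $b_{\min} \leq 0$ the stated bound is vacuous, since the left-hand side is in any case at most $1$ by Cauchy--Schwarz applied to the non-negative Taylor coefficients of $G$, namely $G(L^2 rs)^2 = \bigl(\sum a_n^2 (Lr)^n (Ls)^n\bigr)^2 \leq G(L^2 r^2)\,G(L^2 s^2)$.
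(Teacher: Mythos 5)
Your proof is correct and follows essentially the same route as the paper: both pass to $\psi(u)=\log G(e^{u})$, use $\psi''(u)=b(e^{u})$, and exploit that $\log(L^{2}rs)$ is the midpoint of $\log(L^{2}r^{2})$ and $\log(L^{2}s^{2})$. The only (immaterial) difference is how the quantitative midpoint inequality is obtained -- you use Taylor's formula with integral remainder about the midpoint, while the paper notes that $t\mapsto\psi(t)-\tfrac{1}{2}t^{2}\min\psi''$ is convex and applies midpoint convexity; these yield the same bound $\psi(u_1)+\psi(u_2)-2\psi(u_m)\ge b_{\min}\log^{2}(s/r)$.
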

\begin{proof}
Notice that if $h\left(t\right)$ is a $C^2$ convex function on the interval
$\left[x,y\right]$, the function
$$t \mapsto h\left(t\right) - \tfrac{1}{2}t^{2}\min_{s\in[x,y]}h^{\prime\prime}(s)$$
is also convex, and therefore,
\begin{equation}\label{eq:h_convex}
2h\left(\frac{x+y}{2}\right)\leq h(x)+h(y)-\frac{1}{4}\left(y-x\right)^{2}\min_{s\in[x,y]}h^{\prime\prime}(s).
\end{equation}
Recall that $G(e^t) = \sum_{n=0}^\infty a_n^2 e^{n t}$, with $a_n \ge 0$.  It follows from the Cauchy--Schwarz inequality that
\begin{align*}
G^2(e^t) \cdot \frac{\dd^2}{\dd t^2} \log G(e^t) & =G(e^t)\cdot\frac{\dd^2}{\dd t^2}G(e^t) -\left(\frac{\dd}{\dd t}G(e^t)\right)^2\\
& = \sum_{n=0}^\infty a_n^2 e^{n t} \cdot \sum_{n=0}^\infty n^2 a_n^2 e^{n t} - \left(\sum_{n=0}^\infty n a_n^2 e^{n t}  \right)^2  \geq 0,
\end{align*}
hence the function $t\mapsto\log G(e^{t})$ is convex. Applying \eqref{eq:h_convex} to this function,  with $x=\log\left(L^2 r^2\right)$ and $y=\log\left(L^2 s^2\right)$, we obtain 
\[
\log\left(\frac{G^{2}(L^{2}rs)}{G(L^{2}r^{2})G\left(L^{2}s^{2}\right)}\right)\leq-\log^{2}\frac{s}{r}\cdot\min_{t\in\left[\log\left(L^{2}r^{2}\right),\log\left(L^{2}s^{2}\right)\right]} \frac{\dd^2}{\dd t^2} \log G(e^t),
\]
and we remind that the second derivative of $\log G(e^t)$ is denoted by $b\left(e^{t}\right)$.
\end{proof}

\subsection{Bounding the integral $I_{L}$}

Recall that
\[
I_L = \int_0^\infty \int_0^\infty \chi(r) \chi(s) \, \frac{A(L^2 r s)}{G(L^2 r^2) G(L^2 s^2)}  \, r s \, \dd r \, \dd s,
\quad \chi(r)= r^{k-2}\indf {r}{[1,e^{1/\eta}]},
\]
and therefore by Claim \ref{clm:bnd_inner_int} we have that
\[
I_{L}\leq C \int_{0}^{\infty}\int_{0}^{\infty} \chi(r)\chi(s) \frac{G^{2}(L^{2}rs)}{G(L^{2}r^{2})G\left(L^{2}s^{2}\right)}\frac{1}{\sqrt{b\left(L^{2}rs\right)}}rs\,\dd r\,\dd s.
\]
The above integral is symmetric with respect to the variables $r$
and $s$, and therefore, if we put
\[
M_{L}\eqdef\min_{x\in\left[L^{2}r^{2},L^{2}s^{2}\right]}b\left(x\right),
\]
then, by Claim \ref{clm:bound_for_norm_kernel},
\begin{align*}
I_{L} & \leq C\int_{0}^{\infty}\int_{0}^{s} \chi(r)\chi(s) \frac{G^{2}(L^{2}rs)}{G(L^{2}r^{2})G\left(L^{2}s^{2}\right)}\frac{1}{\sqrt{b\left(L^{2}rs\right)}}rs\,\dd r\,\dd s\\
 & \le C\int_{0}^{\infty}\int_{0}^{s} \chi(r)\chi(s) \exp\left(-\log^{2}\frac{s}{r}\cdot M_{L}\right)\frac{1}{\sqrt{b\left(L^{2}rs\right)}}rs\,\dd r\,\dd s,
\end{align*}
By the definition of $\chi$, we get
\[
I_{L}\leq C \int_{1}^{\infty}\int_{1}^{s}\exp\left(-\log^{2}\frac{s}{r}\cdot M_{L}\right)\frac{1}{\sqrt{b\left(L^{2}rs\right)}}r^{k-1}s^{k-1}\,\dd r\,\dd s.
\]
Fix $k<B_{0}<B$. By the assumption in the statement of the theorem, for $x_0$ sufficiently large $b(x) \ge x^{B_0}$ for all $x \ge x_0$, thus there exist a constant $c>0$ (depending on $B_0$) such that $b\left(x\right)\ge cx^{B_{0}}$
for any $x\geq1$. Therefore,
\[
I_{L}\leq CL^{-B_{0}}\int_{1}^{\infty}\int_{1}^{s}\exp\left(-c\log^{2}\frac{s}{r}\cdot\left(L^{2}r^{2}\right)^{B_{0}}\right) r^{k-B_{0}/2-1}s^{k-B_{0}/2-1}\,\dd r\,\dd s.
\]
Making the change of variables $s=xr$, we obtain
\begin{align*}
I_{L} & \leq C L^{-B_{0}}\int_{1}^{\infty}\int_{1}^{\infty}\exp\left(-c\log^{2}x\cdot\left(Lr\right)^{2B_{0}}\right) r^{2k-B_{0}-1}x^{k-B_{0}/2-1}\,\dd r\,\dd x\\
 & \defeq C L^{-B_{0}}\left[I_{L}^{1}+I_{L}^{2}\right],
\end{align*}
where $I_L^1$ is the integral over the range $x \in [1, 2e]$ and $r \in [1, \infty)$, and $I_L^2$ is the integral over the rest.


To bound $I_{L}^{1},$ we use the elementary inequality $\log x>\tfrac{1}{3}(x-1)$,
for $1\leq x\leq2e,$ and for $L\ge1$ we get
\begin{align*}
I_{L}^{1} & \leq\int_{1}^{\infty}\int_{1}^{2e}\exp\left(-\frac{c}{9}\left(x-1\right)^{2}\cdot\left(Lr\right)^{2B_{0}}\right)r^{2k-B_{0}-1}x^{k-B_{0}/2-1}\,\dd x\,\dd r\\
 & \le (2e)^k \int_{1}^{\infty}\int_{1}^{2e}\exp\left(-\frac{c}{9}\left(x-1\right)^{2}\cdot\left(Lr\right)^{2B_{0}}\right)r^{2k-B_{0}-1}\,\dd x\,\dd r\\
 & \leq \frac{3 (2e)^k}{\sqrt{c}}\int_{1}^{\infty}\frac{1}{\sqrt{\left(Lr\right)^{2B_{0}}}}r^{2k-B_{0}-1}\,\dd r =  \frac{3 (2e)^k}{\sqrt{c}} \int_{1}^{\infty}r^{2k-2B_{0}-1}\,\dd r \cdot L^{-B_{0}}\defeq C_1(B_0, k) L^{-B_{0}}.
\end{align*}
Next we turn to the integral $I_{L}^{2}.$ Since $x>2e$, for $L\ge1$
we get using a linear change of variables
\begin{align*}
\int_{1}^{\infty}\exp\left(-c\log^{2}x\cdot\left(Lr\right)^{2B_{0}}\right) & r^{2k-B_{0}-1} \,\dd r  =\int_{1}^{\infty}\exp\left(-c\left(1+\log\frac{x}{e}\right)^{2}\cdot\left(Lr\right)^{2B_{0}}\right)r^{2k-B_{0}-1}\,\dd r\\
 & \leq\exp\left(-c\log^{2}\frac{x}{e}\right)\int_{1}^{\infty}\exp\left(-c\left(Lr\right)^{2B_{0}}\right)r^{2k-B_{0}-1}\,\dd r\\
 & = \exp\left(-c\log^{2}\frac{x}{e}\right)\int_{L}^{\infty}e^{-cy^{2B_{0}}}\left(\frac{y}{L}\right)^{2k-B_{0}-1}\,\frac{\dd y}{L}\\
 & \le \exp\left(-c\log^{2}\frac{x}{e}\right) L^{B_0 - 2k} \int_{1}^{\infty}e^{-cy^{2B_{0}}}y^{2k-B_{0}-1} \, \dd y \\
 & \defeq \exp\left(-c\log^{2}\frac{x}{e}\right) L^{B_0 - 2k}  \cdot C_2(B_0, k).
\end{align*}
Thus we obtain
\[
I_{L}^{2}\leq  C_2(B_0, k) L^{B_{0}-2k}\int_{2e}^{\infty}\exp\left(-c\log^{2}\frac{x}{e}\right) x^{k-B_{0}/2-1}dx \defeq C_3(B_0, k) L^{B_{0}-2k}.
\]

Since $B_0 > k$, we have established that, 
\[
I_{L}\leq C L^{-B_{0}}\left[I_{L}^{1}+I_{L}^{2}\right]\le C L^{-B_{0}}\left[C_1(B_0, k) L^{-B_{0}}+C_3(B_0, k) L^{B_{0}-2k}\right] \le C_4(B_0, k) L^{-2k},
\]
for $L\ge1$.

\subsection{Finishing the proof}
Recalling \eqref{eq:var_bound}, we conclude that
\[
\var{\int_{\mathbb{C}}\widetilde{\Phi}_{\eta,L}\,\dd\zs f} \le C (k+1)^2 \eta^2 L^{2(k-1)}  \cdot I_L \le C_5(B_0, k) \eta^2 L^{-2}.
\]
Choosing $\eta$ sufficiently small, so that $C_5(B_0, k) \eta^2<\varepsilon$
and appealing to the rigidity condition of Ghosh and Peres, we find
that $\mathcal{Z}_{f}$ is rigid of level $\left\lceil B\right\rceil $.
This completes the proof of Theorem \ref{thm:thm1}.\hfill{}$\square$

\section{Examples\label{sec:Examples}}

Here are some explicit examples for Theorem \ref{thm:thm1}.

\subsection{The Mittag-Leffler function}

We consider the function $f$ whose associated kernel function $G$
is the Mittag-Leffler function 
\[
G(z)=G_{\alpha}(z)=\sum_{n\geq0}\frac{z^{n}}{\Gamma\left(1+\alpha^{-1}n\right)},
\]
where $\alpha\in\left(0,\infty\right)$ is a fixed parameter. Notice
that $G_{1}(z)=e^{z}$ and $G_{\tfrac{1}{2}}\left(z\right)=\cosh\sqrt{z}$.
The asymptotic behavior of $G$ is well-known (see for example \cite[Section 3.5.3]{godberg2008value}).
In particular, as $|z|\to\infty$
\[
G(z)=\begin{cases}
\alpha e^{z^{\alpha}}+O\left(|z|^{-1}\right), & \left|\arg z\right|\leq\frac{\pi}{2\alpha};\\
O\left(|z|^{-1}\right), & \text{otherwise},
\end{cases}
\]

\begin{rem}
Notice that for $\alpha\in\left(0,\tfrac{1}{2}\right]$ the complement
of $\left\{ \left|\arg z\right|\leq\frac{\pi}{2\alpha}\right\} $
is empty.
\end{rem}
From this asymptotic description, one easily verifies the admissibility
assumptions are fulfilled, with
\[
a\left(r\right)\sim\alpha r^{\alpha},\quad b\left(r\right)\sim\alpha^{2}r^{\alpha},\quad r\to\infty.
\]
Thus, by Theorem \ref{thm:thm1}, $\mathcal{Z}_{f}$ is rigid of level
$\left\lceil \alpha\right\rceil $. However, for $\alpha$ an integer,
using a similar proof to \cite{ghosh2015rigidity} and the above asymptotic
formula one can check that $\mathcal{Z}_{f}$ is rigid of level $\alpha+1$.

\subsection{The double exponent and the Lindelöf functions}

Here we consider the function $f$ associated with kernel function
$G(z)=e^{e^{z}}.$ Since $e^{z}$ is admissible, so is $G$. In addition,
$G$ has an infinite order of growth, with
\[
a\left(r\right)=re^{r},\quad b\left(r\right)=r\left(r+1\right)e^{r}.
\]
By Theorem \ref{thm:thm1}, $\mathcal{Z}_{f}$ is fully rigid. 

For $\alpha>0$, we consider the function $f$ whose associated kernel
function $G$ is given by 
\[
G(z)=G_{\alpha}\left(z\right)=\sum_{n\geq0}\frac{z^{n}}{\log^{\alpha n}\left(n+e\right)}.
\]
The function $G$ has an infinite order of growth, and it follows,
for example, from \cite[example 1.4.1]{KIROSODIN}, that $G$ is admissible
with
\[
a\left(r\right)\sim\exp\left(r^{1/\alpha}-1\right),\quad b\left(r\right)\sim\exp\left(r^{1/\alpha}-\frac{\log r}{\alpha}-1-\log\alpha\right),\quad r\to\infty.
\]
Again by Theorem \ref{thm:thm1}, $\mathcal{Z}_{f}$ is fully rigid. 


\printbibliography[]

\end{document}